\theoremstyle{plain}
\newtheorem{Theo}{Theorem}[section]
\newtheorem{Lem}[Theo]{Lemma}
\newtheorem{Prop}[Theo]{Proposition}
\newtheorem{Cor}[Theo]{Corollary}
\theoremstyle{definition}
\newtheorem{Defi}[Theo]{Definition}
\theoremstyle{remark}
\newtheorem{Rem}[Theo]{Remark}
      \def\@setcopyright{}
      \def\serieslogo@{}
\newcommand{\oo}{\mathcal{O}}
\begin{document}

\author{Julio Jos\'e Moyano-Fern\'andez}
\address{Institut f\"ur Mathematik, Universit\"at Osnabr\"uck. Albrechtstra\ss e 28a, D-49076 Osnabr\"uck, Germany}
\email{jmoyanof@uni-osnabrueck.de}

\title[On curve singularity invariants and reductions]{On some curve singularity \\ invariants and reductions}

\begin{abstract}
This paper deals with the study of the behaviour of the value semigroup of a curve singularity define over a global field reduced modulo a maximal ideal. We also define a global zeta function of the curve by means of motivic integration over a suitable ring of ad\'eles, whose reduction modulo a maximal ideal will coincide with already known zeta functions of the singularity.
\end{abstract}

\subjclass{Primary 13D40; Secondary 16W50, 20M99}

\keywords{ad\'eles, motivic integration, reduction of algebraic curves, zeta functions}

 \thanks{The author was partially supported by the Spanish Government Ministerio de Educaci\'on y Ciencia (MEC)
grant MTM2007-64704 in cooperation with the European Union in the framework of the founds ``FEDER'', and by the Deutsche 
Forschungsgemeinschaft (DFG)}

\maketitle

\section{Preliminaries}

Resolution of singularities is a well-known topic since the pioneers Brill and Noether solved the case of complex curves more than a century ago. It became a central problem in algebraic geometry when mathematicians asked for generalisations in the two usual ways, namely resolution for varieties in higher dimensions and resolution in arbitrary ground fields. The introduction of basis change methods in scheme theory allowed to define the concept of reduction modulo a maximal ideal of an algebraic variety defined over a number field, which is nothing else than an attempt to preserve as many good properties as possible when one extend the variety to a scheme over the ring of integers of the field the curve is defined over.
\medskip

Let $C$ be a complex curve singularity. Many equivalent invariants, both topological and analytic,  are since a long time well understood.   We remark among others the associated value semigroup and the Poincar\'e series, profusely studied by v.g. Bayer \cite{B}, Garc\'\i a \cite{G}, Kunz \cite{K}, Waldi \cite{W}, and Campillo, Delgado and Gusein-Zade \cite{C}--\cite{CDG4}, \cite{Del2}, as well as by St\"ohr \cite{Stohr}--\cite{stohrneu} and Z\'{u}\~{n}iga Galindo \cite{Z1}, \cite{Z3} in the last years. The question this note deals with, is to describe the behaviour of these two singularity invariants under reduction modulo a maximal ideal $\mathfrak{P}$ when the singularity is defined over a number field. The first issue to do is to understand resolution of singularities under such a reduction.  
\medskip

For the convenience of the reader we repeat the relevant material from \cite{D}, thus making our exposition self-contained.

\subsection{Reduction modulo $\mathfrak{P}$}

Let $K$ be an arbitrary field of characteristic $0$. Let $R$ be a
discrete valuation ring with quotient field $K$ and maximal ideal
$\mathfrak{P}$.
\medskip

Let $C$ be a plane curve defined over the field $K$ given by an equation
$f(x_1,x_2) \in K [x_1,x_2]$. Let us consider the schemes $X =
\mathrm{Spec}~K[x_1,x_2]$,
$\widetilde{X}=\mathrm{Spec}~R[x_1,x_2]$,
$\overline{X}=\mathrm{Spec}~\overline{K}[x_1,x_2]$ and
\[
D=\mathrm{Spec} \left ( K[x_1,x_2] / (f(x_1,x_2)) \right )
\subset X.
\]

Let $Z$ be a closed subscheme of $\mathbb{P}_{X}^{k}:=
\mathrm{Proj~} K[x_1, x_2, \ldots , x_k]$ for some nonnegative integer $k>0$. We will
denote by $\widetilde{Z}$ the scheme-theoretic closure of $Z$ in
$\mathbb{P}_{\widetilde{X}}^{k}$. If $Z$ is reduced (resp.
integral), then $\widetilde{Z}$ is reduced (resp. integral).
Moreover, $\widetilde{Z}$ is flat over $\mathrm{Spec~}R$.

\begin{Defi}
The reduction modulo $\mathfrak{P}$ of $Z$ is the scheme
\[
\overline{Z} = Z \mathrm{~mod~} \mathfrak{P} := \widetilde{Z}
\times_R \mathrm{Spec~} \overline{K}.
\]
\end{Defi}
Notice that $Z \mathrm{~mod~} \mathfrak{P}$ can be considered a
closed subscheme of $\widetilde{Z}$.
\medskip

\subsection{Resolution of singularities}

Let us recall the basics on resolution of curve singularities. By the sack of simplicity we will assume the singularities to be totally rational (i.e., all field extension degrees involved are assumed to be $1$).

\begin{Defi}
A resolution for the curve $C$ over $K$ is a pair $(Y,\pi)$, where
$Y$ is a closed integral scheme of $\mathbb{P}_{X}^{k}$ for some nonnegative integer
$k$ and $\pi: Y \to X$ is a morphism such that
\begin{itemize}
    \item[(i)] $Y$ is smooth over $\mathrm{Spec~} K$;
    \item[(ii)] the restriction $\pi: Y \setminus \pi^{-1}(D) \to X \setminus
    D$ is an isomorphism;
    \item[(iii)] the reduced scheme $(\pi^{-1}(D))_{\mathrm{red}}$
    associated to $\pi^{-1}(D)$ has only normal crossings as
    subscheme of $Y$.
\end{itemize}
\end{Defi}

The scheme $(\pi^{-1}(D))_{\mathrm{red}}$ will be called the
exceptional divisor of $\pi$. Its irreducible components will be
denoted by $E_i$, for $i \in T$.
\medskip

Next result is well-known in singularity theory:

\begin{Theo}
Any plane curve singularity has a resolution.
\end{Theo}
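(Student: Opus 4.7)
The plan is to construct $(Y,\pi)$ as a finite composition of blowings-up at closed points of $X$, followed if necessary by further blowings-up that enforce the normal-crossings condition (iii). Since the singularity is totally rational, every singular point of $D$ is a $K$-rational closed point; for such a point $p$ one forms the blowing-up $\pi_1: X_1 \to X$ of $X$ at $p$, with exceptional divisor $E_1 \cong \mathbb{P}^1_K$. The scheme $X_1$ is smooth over $\mathrm{Spec}\, K$, and the strict transform $D^{(1)} \subset X_1$ has at most finitely many singular points, all lying above $p$ on $E_1$.

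The second step is to iterate: at each stage, pick a singular point of the current strict transform $D^{(j)}$---again $K$-rational by the total rationality hypothesis, which is stable under blowing-up---and blow it up. This produces a tower
\[
Y' = X_N \to X_{N-1} \to \cdots \to X_1 \to X.
\]
The crucial point is termination, which rests on the classical relation
\[
\delta_p\bigl(D^{(j)}\bigr) \;=\; \binom{m_p}{2} \;+\; \sum_{q \text{ above } p} \delta_q\bigl(D^{(j+1)}\bigr),
\]
where $m_p \geq 2$ is the multiplicity at a singular point. Setting $\Delta_j := \sum_{p} \delta_p(D^{(j)})$ summed over the singular points yields a strictly decreasing sequence of nonnegative integers, so the process halts after finitely many steps with $Y'$ smooth over $\mathrm{Spec}\, K$.

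At this point conditions (i) and (ii) hold: $Y'$ is smooth by construction, and each blowing-up is an isomorphism away from its centre, so the restriction to the complement of the preimage of $D$ is an isomorphism onto $X \setminus D$. To secure (iii), observe that if at some stage an exceptional component meets the strict transform tangentially, or if three components of the total transform concur at a single point, then one further blowing-up at the offending point reduces the relevant local intersection number by one; after finitely many such additional blowings-up one arrives at a scheme $Y$ and morphism $\pi : Y \to X$ for which $(\pi^{-1}(D))_\mathrm{red}$ is a simple normal crossings divisor. The main obstacle in the whole argument is the termination in the second step, which is the classical heart of the theorem: it reflects the strict decrease of the total $\delta$-invariant, or equivalently the finiteness of the Newton--Puiseux data of each branch of $D$.
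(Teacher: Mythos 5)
The paper states this theorem as well-known and offers no proof, but the resolution process it describes immediately afterwards---successive blowing-ups of closed points---is exactly what you carry out, and your termination argument via Noether's formula $\delta_p = \binom{m_p}{2}+\sum_q \delta_q$ (so the total $\delta$-invariant strictly drops, using total rationality to keep all centres $K$-rational) together with the extra blow-ups enforcing normal crossings is the standard correct argument. The only routine point left implicit is that $Y$, being a finite composition of point blow-ups, is projective over $X$ and hence sits as a closed integral subscheme of some $\mathbb{P}^{k}_{X}$ as the paper's definition of a resolution requires.
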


\begin{Defi}[\cite{D}] \label{def:resgoodreduction}
We say that $(Y,\pi)$ is a resolution for $C$ over $K$ with good
resolution modulo $\mathfrak{P}$ if $(Y,\pi)$ is a resolution for
$C$ over $K$ satisfying
\begin{itemize}
    \item[(i)] $\overline{Y}$ is smooth over $\mathrm{Spec~} \overline{K}$;
    \item[(ii)] $\overline{E}_i$ is smooth over $\mathrm{Spec~}
    \overline{K}$ for each $i \in T$, and $\bigcup_{i \in T}
    \overline{E}_i$ has only normal crossings as a subscheme of
    $\overline{Y}$;
    \item[(iii)] $\overline{E}_i$ and $\overline{E}_j$ have no
    common irreducible components for $i \ne j$.
\end{itemize}
\end{Defi}

\begin{Rem} \label{remark:T}
Note that ``reduction modulo $\mathfrak{P}$'' does not change the
set $T$, that is, it does not change the number of components of
the exceptional divisor.
\end{Rem}

If $C$ is a plane curve having a closed singular point, we can
reach a resolution for $C$ just blowing-up closed points
successively to get a chain
\begin{equation}  \nonumber
Y:=X_{N} \overset{\pi_{N}}{\longrightarrow} X_{N-1}
\overset{\pi_{N-1}}{\longrightarrow} \ldots \longrightarrow X_3
\overset{\pi_3}{\longrightarrow} X_2
\overset{\pi_2}{\longrightarrow} X_1
\overset{\pi_1}{\longrightarrow} X_0=:X,
\end{equation}
where $\pi_i$ is the blowing-up of a closed point $p_{i-1} \in
X_{i-1}$, for $1 \le i \le N$. Then $Y$ is achieved by blowing-up
successively closed points $p_i \in X_{i}$, $0 \le i \le N-1$.
This process is called the \emph{resolution process} of $C$. Every
such a closed point $p_i$ corresponds to a local ring $R_i :=
\mathcal{O}_{X_{i},p_i}$. Since the curve $C$ is assumed to be
totally rational, the degree of this field extension is equal to
$1$. Finally, set $\pi := \pi_{N} \circ \pi_{N-1} \circ \ldots
\circ \pi_2 \circ \pi_1$. Further details on curve singularities over arbitrary fields can be founded in \cite{kiyek};  for another recent account see \cite{MoCu}.
\medskip

On the other hand, let us take $\pi:\widetilde{X}_{i} \to
\widetilde{X}_{i-1}$ for any $i \in \{1, \ldots, N \}$. Then we
can consider the restriction to $\widetilde{X}_{i}$ of the
projection $\mathbb{P}^k_{\widetilde{X}} \to \widetilde{X}_{i-1}$
for some $k > 0$, namely
\[
\widetilde{\pi}_i:\widetilde{X}_{i} \to \widetilde{X}_{i-1}.
\]
Even more, using base extension we can get morphisms
\[
\overline{\pi}_i:\overline{X}_{i} \to \overline{X}_{i-1}
\]
with $1 \le i \le N$. Set $\overline{\pi} := \overline{\pi}_{N}
\circ \overline{\pi}_{N-1} \circ \ldots \circ \overline{\pi}_2
\circ \overline{\pi}_1$.
\medskip

\begin{Theo} [\cite{D}]\label{thm:uno}
If $(Y,\pi)$ is a resolution for $C$ over $K$ with good reduction
modulo $\mathfrak{P}$, then $(\overline{Y},\overline{\pi})$ is a
resolution for $C$ modulo $\mathfrak{P}$ over $\overline{K}$.
\end{Theo}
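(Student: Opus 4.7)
The plan is to verify the three defining conditions of a resolution for $(\overline{Y},\overline{\pi})$ with respect to $\overline{D} := D \mathrm{~mod~}\mathfrak{P}$, extracting each from the hypothesis that the original resolution has good reduction in the sense of Definition \ref{def:resgoodreduction}. Condition (i), smoothness of $\overline{Y}$ over $\mathrm{Spec~}\overline{K}$, is built into the very definition of good reduction, so nothing needs to be proved there.

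For condition (ii), I would exploit the flatness of $\widetilde{Y}$ and $\widetilde{X}$ over $\mathrm{Spec~}R$ to promote the isomorphism $\pi : Y \setminus \pi^{-1}(D) \to X\setminus D$ to an isomorphism over $R$ on the complements of the scheme-theoretic closures. Concretely, consider the resolution process as an iterated blowing-up $X_N \to \cdots \to X_0 = X$; each blowing-up $\pi_i$ is an isomorphism away from its center, and by standard properties of blowing-ups and flat base change (see e.g.\ the behaviour of strict transforms over a flat base), the induced morphisms $\widetilde{\pi}_i$ are isomorphisms outside the scheme-theoretic closures of the centers. Restricting to the special fibre, $\overline{\pi}_i$ is then an isomorphism outside $\overline{p}_{i-1}$, and composing yields that $\overline{\pi}: \overline{Y} \setminus \overline{\pi}^{-1}(\overline{D}) \to \overline{X} \setminus \overline{D}$ is an isomorphism.

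For condition (iii), I would combine Remark \ref{remark:T} with the three hypotheses (i)--(iii) of Definition \ref{def:resgoodreduction}. The remark guarantees that the index set $T$ is the same before and after reduction, and the irreducible components of $(\overline{\pi}^{-1}(\overline{D}))_{\mathrm{red}}$ are precisely the reductions $\overline{E}_i$ of the original components $E_i$; since each $\overline{E}_i$ is smooth over $\overline{K}$, since the pairwise components share no common irreducible component, and since $\bigcup_{i\in T}\overline{E}_i$ has only normal crossings in $\overline{Y}$, the required normal-crossing property of $(\overline{\pi}^{-1}(\overline{D}))_{\mathrm{red}}$ follows.

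The main obstacle I anticipate is the bookkeeping needed to justify that the exceptional divisor \emph{as a reduced subscheme} is compatible with reduction, i.e.\ that $(\overline{\pi}^{-1}(\overline{D}))_{\mathrm{red}} = \bigcup_{i\in T}\overline{E}_i$. A priori, taking preimages, reductions, and base changes modulo $\mathfrak{P}$ need not commute with the passage to the underlying reduced subscheme, and one must ensure that no extra embedded or nilpotent components appear in the special fibre that would spoil the normal-crossing condition. This is where the flatness of $\widetilde{Y}$ over $R$ and the smoothness hypothesis on each $\overline{E}_i$ must be used, together with the fact (noted before the definition) that the scheme-theoretic closure preserves reducedness and integrality; combining these should close the argument.
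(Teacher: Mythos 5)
Your overall strategy --- checking the three conditions of a resolution for $(\overline{Y},\overline{\pi})$ directly against the hypotheses of Definition \ref{def:resgoodreduction} --- is the same as the paper's, and your treatment of smoothness and of the normal-crossing condition (via Remark \ref{remark:T} and conditions (ii)--(iii) of good reduction) matches the published argument. Where you diverge is the isomorphism condition: the paper does not re-prove it from the blowing-up structure, but simply invokes Denef's results (\cite{D}, Proposition 2.6), which give both that $\overline{Y}$ is a closed \emph{integral} subscheme of projective space over $\overline{K}$ and that $\overline{Y}\to\overline{X}$ is birational, hence an isomorphism off the reduced exceptional locus. Your alternative --- running the reduction through the chain of blow-ups and using flat base change to see that each $\overline{\pi}_i$ is an isomorphism outside the reduced centre --- is a reasonable and more self-contained route, but it silently assumes that reduction modulo $\mathfrak{P}$ commutes with blowing up at the relevant centres (i.e.\ that the special fibre of $\widetilde{X}_i$ is the blow-up of $\overline{X}_{i-1}$ at $\overline{p}_{i-1}$); that compatibility is exactly the nontrivial content packaged in Denef's good-reduction machinery, so if you take this route you must either prove it or cite it, not merely appeal to ``standard properties of blowing-ups''.

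Two smaller points. First, you never address the requirement, built into the definition of a resolution, that $\overline{Y}$ be a closed \emph{integral} scheme of $\mathbb{P}^k_{\overline{X}}$: integrality of $\widetilde{Y}$ does not automatically pass to its special fibre, and the paper disposes of this by citing \cite{D}, Proposition 2.6 (b); your write-up should do likewise (or deduce it from smoothness plus connectedness of the special fibre). Second, the difficulty you flag at the end --- identifying $(\overline{\pi}^{-1}(\overline{D}))_{\mathrm{red}}$ with $\bigcup_{i\in T}\overline{E}_i$ --- is a fair concern which the paper passes over without comment; under the good-reduction hypotheses (smoothness of the $\overline{E}_i$, no shared components, Remark \ref{remark:T}) it does come out as you expect, but closing it cleanly again amounts to citing the properties of the reductions $\overline{E}_i$ established in \cite{D}. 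So: correct in outline, same skeleton as the paper, with the paper's citations to Denef replaced by sketched arguments that still need to be completed or referenced.
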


\begin{proof} First of all, $\overline{Y}$ is a closed integral scheme of
$\mathrm{Proj~} \overline{K}[x_1, \ldots , x_k]$ for some $k >0$
(see \cite{D}, Proposition 2.6 (b)). By Definition
\ref{def:resgoodreduction}, condition (ii), $\overline{Y}$ is
smooth over $\mathrm{Spec}~ \overline{K}$. Considering now the
reduced structure of the exceptional divisor of $\overline{\pi}$,
i.e., $\bigcup_{i \in T} \overline{E}_i$, it has only normal
crossings as subscheme of $\overline{Y}$ (again by Definition
\ref{def:resgoodreduction}). Last, the restriction $\overline{Y}
\setminus \bigcup_{i \in T} \overline{E}_i \to \overline{X}
\setminus D$ is an isomorphism, since the morphism $\overline{Y}
\to \overline{X}$ is birational (cf. \cite{D}, Proposition 2.6
(i)). Then $(\overline{Y},\overline{\pi})$ is a resolution for $C
\mathrm{~mod~} \mathfrak{P}$.
\end{proof}

\begin{Defi}
Let $C$ (resp. $C^{\prime}$) be a plane curve with resolution
$(Y,\pi)$ (resp. $(Y^{\prime},\pi^{\prime})$) over a field $K$
(resp. $K^{\prime}$). Let
\begin{equation} \nonumber
Y:=X_{N} \overset{\pi_{N}}{\longrightarrow} X_{N-1}
\overset{\pi_{N-1}}{\longrightarrow} \ldots \longrightarrow X_3
\overset{\pi_3}{\longrightarrow} X_2
\overset{\pi_2}{\longrightarrow} X_1
\overset{\pi_1}{\longrightarrow} X_0=:X,
\end{equation}
be the resolution process for $C$, and let
\begin{equation} \nonumber
Y^{\prime}:=X^{\prime}_{N^{\prime}}
\overset{\pi^{\prime}_{N^{\prime}}}{\longrightarrow}
X^{\prime}_{N^{\prime}-1}
\overset{\pi^{\prime}_{N^{\prime}-1}}{\longrightarrow} \ldots
\longrightarrow X^{\prime}_3
\overset{\pi^{\prime}_3}{\longrightarrow} X^{\prime}_2
\overset{\pi^{\prime}_2}{\longrightarrow} X^{\prime}_1
\overset{\pi^{\prime}_1}{\longrightarrow}
X^{\prime}_0=:X^{\prime},
\end{equation}
be the resolution process for $C^{\prime}$. The curves $C$ and
$C^{\prime}$ are said to have the same resolution process if
$N=N^{\prime}$ and the strict transforms $\widetilde{C}^{(i)}$
(resp. $(\widetilde{C^{\prime}})^{(i)}$) of $C$ (resp.
$C^{\prime}$) via $\pi_i$ (resp. $\pi^{\prime}_i$) have the same
multiplicity.
\end{Defi}

\begin{Prop} \label{prop:agreeprocess}
Let $C$ be a plane curve. Let $(Y,\pi)$ be the resolution process
of a totally rational closed point of $C$, and
$(\overline{Y},\overline{\pi})$ the corresponding process of the
curve $\overline{C}$. These two resolution processes agree.
\end{Prop}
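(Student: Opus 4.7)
The plan is to induct on the step $i \in \{0, 1, \ldots, N\}$ of the resolution process, establishing simultaneously two claims: (a) the blowing-up $\pi_{i+1}$ of $p_i \in X_i$ reduces modulo $\mathfrak{P}$ to $\overline{\pi}_{i+1}$, the blowing-up of $\overline{p}_i \in \overline{X}_i$; and (b) the multiplicity of the strict transform $\widetilde{C}^{(i)}$ at $p_i$ coincides with the multiplicity of the strict transform of $\overline{C}$ at $\overline{p}_i$. Granted these two claims, both processes terminate at the same step, since the resolution process ends exactly when every point of the strict transform lying over $D$ has multiplicity one; hence $N = N^{\prime}$ and the multiplicities agree stepwise, which is exactly the definition of two resolution processes agreeing.

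For the base case $i = 0$ I would note that $\overline{X}_0 = \overline{X}$ by definition, and that the totally rational assumption forces $p_0$ to have residue field $K$, so its closure in $\widetilde{X}$ reduces to a well-defined closed point $\overline{p}_0 \in \overline{X}$ of residue field $\overline{K}$. For the inductive step, assuming that the first $i$ blowing-ups agree, I would invoke the fact that blowing-up commutes with flat base change: since $\widetilde{X}_i$ is flat over $\mathrm{Spec~}R$, the scheme-theoretic closure $\widetilde{X}_{i+1}$ of $X_{i+1}$ inside $\mathbb{P}^{k}_{\widetilde{X}_i}$ is the blowing-up of $\widetilde{X}_i$ along the closure of $p_i$, and reducing modulo $\mathfrak{P}$ then produces precisely the blowing-up of $\overline{X}_i$ at $\overline{p}_i$. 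This gives (a). The good-reduction hypothesis of Definition~\ref{def:resgoodreduction} ensures that the exceptional divisor and its components remain smooth in the fibre over $\mathfrak{P}$, so the next totally rational center $p_{i+1}$ has a well-defined reduction $\overline{p}_{i+1}$ of residue field $\overline{K}$, continuing the induction.

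For (b), the multiplicity at $p_i$ is the leading coefficient of the Hilbert--Samuel polynomial of the local ring $\mathcal{O}_{\widetilde{C}^{(i)}, p_i}$; because the corresponding scheme-theoretic closure in $\mathbb{P}^{k}_{\widetilde{X}_i}$ is flat over $R$, this polynomial is constant on the fibres of $\mathrm{Spec~}R$, and consequently the multiplicity is invariant under reduction modulo $\mathfrak{P}$. The main technical obstacle lies exactly here: one must check carefully that the scheme-theoretic closure of each successive strict transform remains flat over $R$ and that its Hilbert--Samuel polynomial is preserved in the special fibre. The totally rational hypothesis is indispensable, since it rules out residue-field extensions under specialisation that could otherwise alter the multiplicities and break the step-by-step matching.
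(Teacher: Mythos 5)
Your overall skeleton (same number of blow-ups because the reduced process is obtained by reducing each blow-up; multiplicities of strict transforms preserved) is the same as the paper's, but the justification you give for the multiplicity step is genuinely wrong, and that step is the heart of the proposition. Flatness of the scheme-theoretic closure over $R$ does \emph{not} make the local Hilbert--Samuel polynomial constant along specialisation: flatness of a projective family controls the \emph{global} Hilbert polynomial, while the multiplicity at a point is only upper semicontinuous and can jump in the special fibre. For instance, if $u$ is a uniformiser of $R$, the $R$-flat curve $y^2=x^2+ux$ is smooth over $K$ at the origin but its reduction $y^2=x^2$ has a point of multiplicity $2$ there; likewise for $y^2=x^3(x+u)$ the multiplicities at the origin agree ($2$ and $2$), yet the first strict transforms $y_1^2=x(x+u)$ and $y_1^2=x^2$ have multiplicities $1$ and $2$ respectively. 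So the statement you are trying to prove is simply false for such ``bad'' primes, and no flatness argument can establish it. What actually rules this out is the good-reduction hypothesis built into the construction of $\overline{\pi}$ (Definition \ref{def:resgoodreduction}, available for all but finitely many $\mathfrak{P}$ by Theorem \ref{thm:denef}), combined with total rationality to keep all residue degrees equal to $1$; your proof mentions good reduction only to get well-defined centres $\overline{p}_i$, not at the point where it is indispensable. The paper's own (terse) proof instead gets $N=N'$ directly from the construction of $\overline{\pi}$ and Remark \ref{remark:T}, and attributes the equality of multiplicities to total rationality within that standing good-reduction setting.

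Two secondary problems. First, you invoke ``blowing-up commutes with flat base change'' to pass to the special fibre, but the base change $\mathrm{Spec}~\overline{K}\to\mathrm{Spec}~R$ is not flat; what is needed is that blowing up $\widetilde{X}_i$ along the $R$-flat, regularly embedded centre given by the closure of the totally rational point (a section of a smooth $R$-scheme) commutes with passage to the fibre, and the smoothness used here is again part of the good-reduction package rather than a general fact. Second, your termination argument (``the process ends exactly when every point of the strict transform has multiplicity one'') is not the stopping criterion for the embedded, normal-crossings resolution used in this paper, and it is unnecessary: the equality $N=N'$ is automatic because $\overline{\pi}$ is by definition the composition of the reductions $\overline{\pi}_i$ of the given blow-ups.
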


\begin{proof}~By construction of $\overline{\pi}$ and Remark \ref{remark:T}, both $(Y,\pi)$ and
$(\overline{Y},\overline{\pi})$ consist of the same number of
blowing-ups. Moreover, since the curve $C$ is assumed to be
totally rational, the multiplicities of the strict transforms of
$C$ and $\overline{C}$ coincide.
\end{proof}

The following theorem, due to Denef (cf. \cite[Theorem 2.4]{D}), ensures the existence of a resolution
with good reduction modulo $\mathfrak{P}$ for plane curves over number fields.

\begin{Theo} \label{thm:denef}
Let $F$ be a number field, $A$ its ring of algebraic integers. Let
$C$ be a plane curve over $F$ and $(Y,\pi)$ a resolution for $C$
over $F$. Then $(Y,\pi)$ is a resolution for $C$ over $F$ with
good reduction modulo $\mathfrak{P} A_{\mathfrak{P}}$ for all
except a finite number of maximal ideals $\mathfrak{P}$ of $A$.
\end{Theo}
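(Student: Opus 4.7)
The plan is to spread out the given resolution to a model over $\mathrm{Spec}\,A$ and to show that each of the three conditions of Definition \ref{def:resgoodreduction} cuts out an open subset of $\mathrm{Spec}\,A$ which contains the generic point; the intersection of these three open sets is then open, contains the generic point, and its complement consists of only finitely many maximal ideals of $A$, which are precisely the excluded primes. Concretely, I would form the scheme-theoretic closures $\widetilde{Y}$ of $Y$ and $\widetilde{E}_i$ of the finitely many components of the exceptional divisor (indexed by $i\in T$) inside $\mathbb{P}^k_{\widetilde{X}}$. As recorded in Section 1.1, these closures are flat over $\mathrm{Spec}\,A$ and remain integral after inverting some nonzero $s\in A$; this already provides a candidate resolution over the open subscheme $\mathrm{Spec}\,A[1/s]$.

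For condition (i), I would use that smoothness of the fibers of a flat finite-type morphism of Noetherian schemes is an open condition on the base: since $Y\to\mathrm{Spec}\,F$ is smooth by hypothesis, the locus in $\mathrm{Spec}\,A$ at which the fibers of $\widetilde{Y}\to\mathrm{Spec}\,A$ are smooth contains the generic point, so its complement consists of finitely many maximal ideals. The same argument applied to each of the finitely many $\widetilde{E}_i$ handles the first half of condition (ii). The normal-crossings half of (ii) and condition (iii) can each be detected by a Jacobian-rank condition (respectively, an ideal-equality condition) that is already satisfied on the generic fiber; the corresponding failure loci are closed in $\mathrm{Spec}\,A$ and do not contain the generic point, so each excludes only finitely many primes.

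The main obstacle I anticipate is a careful treatment of the normal-crossings condition, since it is not merely pointwise smoothness of each component but also requires the $\widetilde{E}_i$ to meet transversally on each fiber; this can be reduced to comparing ranks of Jacobian matrices over $A$ and over $F$, once local equations for the $\widetilde{E}_i$ around each intersection stratum have been chosen. Taking the union of the finitely many bad primes arising from (i), (ii), and (iii) yields the finite exceptional set of the theorem, and for every $\mathfrak{P}$ outside this set, base-changing to $A_{\mathfrak{P}}$ and reducing modulo $\mathfrak{P}A_{\mathfrak{P}}$ produces the required good reduction of $(Y,\pi)$.
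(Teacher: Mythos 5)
The paper does not prove this statement at all: it is quoted verbatim from Denef (\cite[Theorem 2.4]{D}) and used as a black box, so there is no internal proof to compare yours against. Judged on its own, your spreading-out sketch is the standard way this kind of result is established and the overall structure is sound: spread $Y$, $\pi$ and the finitely many $E_i$ to flat models over $\mathrm{Spec}\,A[1/s]$, observe that each condition in Definition \ref{def:resgoodreduction} holds on the generic fiber, and conclude that the set of bad maximal ideals is finite because $\mathrm{Spec}\,A$ is one-dimensional.

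One step as stated is not quite right and should be repaired, though the repair is easy. Smoothness of \emph{all} fibers of a flat finite-type morphism is an open condition on the base only under a properness (or at least closedness) hypothesis, and here $\widetilde{Y}\to\mathrm{Spec}\,A$ is not proper, since $X=\mathrm{Spec}\,K[x_1,x_2]$ is affine and $Y$ sits in $\mathbb{P}^k_X$; likewise the ``failure loci'' for the normal-crossings and common-component conditions are closed in $\widetilde{Y}$ (or in $\widetilde{E}_i\cap\widetilde{E}_j$), not in $\mathrm{Spec}\,A$. The correct statement is that the image in $\mathrm{Spec}\,A$ of each such closed bad locus is constructible by Chevalley's theorem; since the generic fiber satisfies the condition, this image misses the generic point, and a constructible subset of the one-dimensional scheme $\mathrm{Spec}\,A$ that misses the generic point is finite. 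With ``open complement'' replaced by this constructibility argument (and a remark that forming the closure commutes with the flat base change $A[1/s]\to A_{\mathfrak{P}}$, so that the model you reduce modulo $\mathfrak{P}A_{\mathfrak{P}}$ is the one required by Definition \ref{def:resgoodreduction}), your outline gives a complete proof along the expected lines.
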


\section{The value semigroup and Poincar\'e series of a plane curve singularity and reduction modulo $\mathfrak{P}$}

After introducing some elementary machinery, we will prove a first remarkable result: the value semigroup does not change under reduction modulo $\mathfrak{P}$.
\medskip

Let $F$ be a number field. Let $A$ be the ring of integers in $F$.
Assume that $C$ is a complete, geometrically irreducible,
algebraic curve defined over $F$. Let $P$ be a rational and
singular point of $C$. Assume that $\widehat{\mathcal{O}}_{P,C}$
is totally rational. Then $\widehat{\mathcal{O}}_{P,C}$ \ can be
presented in the form
\[
\widehat{\mathcal{O}}_{P,C}:=\left\{
\left(
{\textstyle\sum\nolimits_{i=0}^{\infty}}
a_{i,1}t_{1}^{i},\ldots,%
{\textstyle\sum\nolimits_{i=0}^{\infty}}
a_{i,d}t_{d}^{i}\right)
\in\widetilde{\mathcal{O}}_{P,C} \mid \Delta =0\right\}  ,
\]
 where$\widetilde{\mathcal{O}}_{P,C}\cong F\left[ \! \left[  t_{1}\right] \! \right]
\times\ldots\times F\left[  \! \left[ t_{d}\right]  \! \right]  $ and
$\Delta=0$ is a homogeneous system of linear equations with
coefficients in $F$.

\subsection{Value semigroup}

Every factor in $\widetilde{\mathcal{O}}_{P,C}$ yields a discrete valuation $v_i$, for every $1 \le i \le d$, and thus a vector $\underline{v}(\underline{z})=(v_1{z_1}, \ldots , v_d(z_d))$ for any nonzero divisor $\underline{z}=(z_1, \ldots , z_d) \in \widetilde{\mathcal{O}}_{P,C}$. 

\begin{Defi}
The value semigroup $S$ of $\widehat{\oo}_{P,C}$ consists of all the elements of the form $\underline{v}(\underline{z})=(v_1(z_1), \ldots , v_d(z_d)) \in \mathbb{N}^d$ for all the nonzero divisors $\underline{z} \in \widetilde{\mathcal{O}}_{P,C}$. 
\end{Defi}

Observe that the value semigroup of $\widehat{O}_{P,C}$ coincides with that of $\mathcal{\oo}_{P,C}$ (cf. \cite[\S 2]{MZ}).
\medskip

The \textit{degree of singularity} of the ring $\oo_{P,C}$ is defined as
\[
\delta(\oo_{P,C})=\delta_P=\dim_{k}\widetilde{\mathcal{O}}_{P,C}/\mathcal{O}_{P,C},
\]
where $\widetilde{\mathcal{O}}_{P,C}$ is the normalisation of the ring $\mathcal{O}%
_{P,C}$ . By \cite[Theorem 1]{Ros1}, $\delta_{P}<\infty$. 
\medskip

We set $\underline{1}:=\left(  1,\ldots,1\right)  \in\mathbb{N}^{d}$ and, for
$\underline{n}=\left(  n_{1},\ldots,n_{d}\right)  \in\mathbb{N}^{d}$, we consider the vector
$\left\Vert \underline{n}\right\Vert :=n_{1}+\ldots+n_{d}$. We introduce a
partial order in $\mathbb{N}^{d}$, \textit{the product order}, by taking
$\underline{n}\geq\underline{m}$, if $n_{i}\geq m_{i}$ for every $i=1,\ldots,d$.

\begin{Defi}
Let $C$ be a plane curve. We define a complex model for $C$ to be
a plane curve $C^{\prime}$ over $\mathbb{C}$ having the same
resolution process.
\end{Defi}

The following theorem is due to Campillo (see \cite[Proposition
4.3.12]{C}):

\begin{Theo} \label{thm:campillo}
Two given plane curves, defined over fields which are different in
general, have the same value semigroup if and only if they have a
complex model in common. In particular, the value semigroup of a
curve and that of any of its models agree.
\end{Theo}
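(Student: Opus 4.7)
The plan is to exploit the fact that the value semigroup $S$ of a plane curve singularity is completely determined by, and in turn determines, its resolution process; once this dictionary is in hand, the theorem reduces to a combinatorial comparison of the blow-up data of $C$ and $C'$.

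My first step would be to recall, for a single branch, the classical correspondence between the multiplicity sequence $(m_0, m_1, \ldots, m_N)$ of the resolution process and the characteristic (or minimal) generators $\overline{\beta}_0, \ldots, \overline{\beta}_g$ of the value semigroup $S \subseteq \mathbb{N}$: the $\overline{\beta}_i$ are computable from the $m_j$ by Noether-type formulas, and conversely the $m_j$ can be recovered from the minimal generating set of $S$ by a Euclidean algorithm on successive quotients. For the multi-branch case, I would extend this to a bijective correspondence between the full resolution data (multiplicity sequence together with the infinitely near intersection multiplicities of the branches) and value semigroups in $\mathbb{N}^d$, as in Campillo's treatment.

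The ``if'' direction is then immediate: if $C$ and $C'$ share a complex model $C''$, then by definition all three have the same resolution process, so Proposition \ref{prop:agreeprocess} (together with the analogous statement for intersection multiplicities) shows that their semigroups coincide. For the ``only if'' direction, I would argue that a common value semigroup determines a unique abstract resolution process, and then appeal to Enriques-style realisability, which guarantees the existence of a complex plane curve realising any prescribed sequence of strict-transform multiplicities and infinitely near intersections; such a curve provides the common complex model. The ``in particular'' clause is then the ``if'' direction applied to $C$ together with any of its complex models.

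The main obstacle is the multi-branch half of this dictionary: while the reconstruction of the multiplicity sequence from the semigroup is classical for a single branch, in the general case one must also recover how the valuations on distinct branches interact, and this information is carried by the poset structure of $S \subset \mathbb{N}^d$ rather than merely by its one-dimensional projections. Verifying that the joint intersection multiplicities at each infinitely near point can indeed be read off from this structure is the delicate technical core and corresponds precisely to Campillo's Proposition 4.3.12.
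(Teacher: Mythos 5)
The paper itself gives no proof of this statement: it is quoted as an external result, namely Proposition 4.3.12 of Campillo's book \cite{C}, so there is no internal argument to compare yours against. Judged as a self-contained proof, your outline follows the standard strategy (value semigroup $\leftrightarrow$ equisingularity/resolution data, plus realisability of that data by a complex curve), which is indeed the idea behind Campillo's result, but it has a genuine gap at exactly the decisive point. In the ``only if'' direction you must show that the semigroup $S\subset\mathbb{N}^d$ determines the full resolution process, including the multiplicity sequences of the individual branches \emph{and} their pairwise intersection behaviour at infinitely near points; you explicitly acknowledge that this step ``corresponds precisely to Campillo's Proposition 4.3.12'' and do not prove it. Since that proposition is (essentially) the theorem under review, the proposal is circular where it matters: the single-branch Noether/Euclidean dictionary you recall does not by itself yield the multi-branch reconstruction, because the interaction data is encoded in the finer structure of $S$ (e.g.\ via the maximal points of the sets $\{\underline{n}: n_i \text{ fixed}\}$), and extracting it is the technical core that would have to be carried out. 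Likewise, ``Enriques-style realisability'' of an abstract system of multiplicities and proximities by an actual complex plane curve is invoked by name only; it is classical, but a proof would need at least a sketch (construction of branches with prescribed characteristic exponents and prescribed coincidences, e.g.\ via Puiseux parametrisations).

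A smaller but real misstep: you cite Proposition \ref{prop:agreeprocess} for the ``if'' direction, but that proposition compares a curve with its reduction modulo $\mathfrak{P}$ at a totally rational point; it says nothing about two arbitrary curves over different fields sharing a resolution process. What the ``if'' direction actually needs is the implication ``same resolution process $\Rightarrow$ same value semigroup'' for totally rational plane curve singularities, which is again part of the dictionary you are assuming rather than proving. So the architecture of your argument is the right one, but as written it defers both halves of the key equivalence to the very result being established.
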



Let $\mathfrak{P}$ be a maximal \ ideal of $A$ with residue field
$\mathbb{F}_{q}$. We define the reduction mod $\mathfrak{P}$ of
$\widehat{\mathcal{O}}_{P,C}$ as
\[
\widehat{\mathcal{O}}_{P,C\text{ mod }\mathfrak{P}}:=\left\{
\left(
{\textstyle\sum\nolimits_{i=0}^{\infty}}
b_{i,1}t_{1}^{i},\ldots,%
{\textstyle\sum\nolimits_{i=0}^{\infty}}
b_{i,d}t_{d}^{i}\right)
\in\widetilde{\mathcal{O}}_{P,C\text{ mod } \mathfrak{P}}\mid\overline{\Delta }=0\right\}  ,
\]
where $\widetilde{\mathcal{O}}_{P,C \text{ mod } \mathfrak{P}}\cong\mathbb{F}_{q}\left[ \!
\left[ t_{1}\right] \!  \right]
\times\ldots\times\mathbb{F}_{q}\left[  \! \left[ t_{d}\right] \!
\right]  $ and $\overline{\Delta}=0$\ denotes the reduction mod
$\mathfrak{P}$ of $\Delta=0$.
\medskip

We can now compare the value semigroup of a curve over a number
field of characteristic $0$ and its reduction modulo
$\mathfrak{P}$:

\begin{Theo} \label{lemma:semigroups}
For all except a finite number of maximal ideals $\mathfrak{P}$ of
$A$ one has
\[
S(\mathcal{O}_{P,C}) = S(\mathcal{O}_{P, C \mathrm{~mod~}
\mathfrak{P}}).
\]
\end{Theo}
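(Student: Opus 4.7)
The plan is to reduce the statement to Campillo's characterisation (Theorem \ref{thm:campillo}), which says that two plane curves have the same value semigroup if and only if they share a common complex model, i.e.\ the same resolution process. Accordingly, it suffices to establish that, for all but finitely many maximal ideals $\mathfrak{P}$ of $A$, the resolution process of the singularity $(P,C)$ and that of $(P \bmod \mathfrak{P},\, C \bmod \mathfrak{P})$ coincide.

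The natural route is to chain the results of Section~1. First, I would invoke Denef's Theorem \ref{thm:denef}: any fixed resolution $(Y,\pi)$ of $C$ over $F$ has good reduction modulo $\mathfrak{P}$ for all but a finite set of maximal ideals $\mathfrak{P}$ of $A$; this finite set is exactly what I would exclude in the conclusion. For any admissible $\mathfrak{P}$, Theorem \ref{thm:uno} gives that $(\overline{Y},\overline{\pi})$ is a resolution of $C \bmod \mathfrak{P}$ over $\overline{K}$, and Remark \ref{remark:T} ensures that its resolution process involves the same number of successive blowing-ups as the one for $C$. Proposition \ref{prop:agreeprocess}, whose totally rational hypothesis is furnished by the standing assumption on $P$, then asserts that the multiplicities of the successive strict transforms of $C$ and of $\overline{C}$ agree. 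Thus the resolution processes of $C$ at $P$ and of $\overline{C}$ at $P \bmod \mathfrak{P}$ are identical.

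Consequently, $C$ and $\overline{C}$ admit a common plane complex curve $C^{\prime}$ as complex model, obtained by selecting any $C^{\prime}$ over $\mathbb{C}$ whose resolution has the shared number of blow-ups and the shared multiplicity sequence. Theorem \ref{thm:campillo} applied first to the pair $(C, C^{\prime})$ and then to the pair $(\overline{C}, C^{\prime})$ gives
\[
S(\oo_{P,C}) \;=\; S(\oo_{P^{\prime},C^{\prime}}) \;=\; S(\oo_{P \bmod \mathfrak{P},\, C \bmod \mathfrak{P}}),
\]
which is the desired equality.

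The main subtlety lies not in any single step but in the cumulative dependence on the totally rational hypothesis: it is precisely this assumption that guarantees the number $d$ of analytic branches through $P$, as well as all multiplicities encountered during the resolution, persist under reduction modulo $\mathfrak{P}$ with no field-extension correction, and thereby legitimises both the use of Proposition \ref{prop:agreeprocess} and the subsequent identification of complex models. Granted this, the proof is a formal concatenation of the quoted results, and the finiteness of the exceptional set of $\mathfrak{P}$'s is inherited directly from Theorem \ref{thm:denef}.
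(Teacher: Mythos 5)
Your proposal is correct and follows essentially the same route as the paper: Denef's Theorem \ref{thm:denef} to get good reduction for almost all $\mathfrak{P}$, Theorem \ref{thm:uno} and Proposition \ref{prop:agreeprocess} to identify the resolution processes of $C$ and $\overline{C}$, hence a common complex model, and finally Campillo's Theorem \ref{thm:campillo} to conclude equality of the value semigroups. The additional remarks on the role of the totally rational hypothesis are consistent with the paper's standing assumptions.
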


\begin{proof}
It is just to apply previous results:

\begin{enumerate}
    \item We have a field number of characteristic $0$, then we
    have a resolution $(Y,\pi)$ of $C$ over $F$ with good
    reduction modulo $\mathfrak{P}$ for almost all $\mathfrak{P}$,
    by Theorem \ref{thm:denef}.
    \item Then $(\overline{Y},\overline{\pi})$ is a resolution of
    $\overline{C}=C \mathrm{~mod~} \mathfrak{P}$ over
    $\overline{K}$ for almost all $\mathfrak{P}$, by Theorem \ref{thm:uno}, and with the same
    resolution process as $(Y,\pi)$ by Proposition
    \ref{prop:agreeprocess}.
    \item Then $C$ and $\overline{C}$ have the same complex model
    for almost all $\mathfrak{P}$.
    \item Last, by Theorem \ref{thm:campillo} the curves $C$ and
    $\overline{C}$ have the same value semigroup, for almost all
    $\mathfrak{P}$.
\end{enumerate}
\end{proof}

\subsection{Generalised Poincar\'e series and Zeta functions}

Let $Var_{k}$ denote the category of $k$-algebraic varieties, and by
$K_{0}\left(  Var_{k}\right)  $\ the corresponding Grothendieck ring. It is
the ring generated by symbols $\left[  V\right]  $, for $V$ an algebraic
variety, with the relations $\left[  V\right]  =\left[  W\right]  $ if $V$ is
isomorphic to $W$, $\left[  V\right]  =\left[  V\setminus Z\right]  +\left[
Z\right]  $ if $Z$ is closed in $V$, and $\left[  V\times W\right]  =\left[
V\right]  \left[  W\right]  $. We denote $\boldsymbol{1}:=\left[
\text{point}\right]  $, $\mathbb{L}:=\left[  \mathbb{A}_{k}^{1}\right]  $ and
$\mathcal{M}_{k}:=K_{0}\left(  Var_{k}\right)  $\ $\left[  \mathbb{L}%
^{-1}\right]  $ the ring obtained by localization with respect to the
multiplicative set generated by $\mathbb{L}$. \medskip

For $\underline{n}\in S$ we set
\[
\mathcal{I}_{\underline{n}}:=\left\{  I\subseteq\mathcal{O}_{P,C}\mid I=z\mathcal{O}_{P,C}%
\text{, with }\underline{v}(\underline{z})=\underline{n}\right\}  ,
\]
and for $m\in\mathbb{N}$,%
\[
\mathcal{I}_{m}:=%
{\textstyle\bigcup\nolimits_{\substack{\underline{n}\in S\\\left\Vert
\underline{n}\right\Vert =m}}}
\mathcal{I}_{\underline{n}}.
\]

\begin{Defi}
\label{def5}We associate to $\mathcal{O}_{P,C}$ the two following zeta functions:
\begin{equation}
Z\left( t_{1},\ldots,t_{d},\mathcal{O}_{P,C}\right)  :=%
{\textstyle\sum\nolimits_{\underline{n}\in S}}
\left[  \mathcal{I}_{\underline{n}}\right]  \mathbb{L}^{-\left\Vert
\underline{n}\right\Vert }t^{\underline{n}}\in\mathcal{M}_{k}\left[ \! \left[
t_{1},\ldots,t_{d}\right] \! \right]  , \label{zeta1}%
\end{equation}
where $t^{\underline{n}}:=t_{1}^{n_{1}}\cdot\ldots\cdot t_{d}^{n_{d}}$, and
\begin{equation}
Z\left(t,\mathcal{O}_{P,C}\right)  :=Z\left (t,\ldots,t,\mathcal{O}_{P,C}\right)  .
\label{zeta2}%
\end{equation}
\end{Defi}

This zeta function was introduced in \cite{MZ}; it coincides--up to a factor--with the generalised Poincar\'e series $P_g(t_1, \ldots, t_d)$ defined by Campillo, Delgado and Gusein-Zade in \cite{CDG4} and was studied under a more general setting by the author in \cite{MM}:

\begin{Lem} \label{lemma:coincidence}
\[
Z(t_1, \ldots , t_d)=\mathbb{L}^{\delta +1} P_g(t_1, \ldots , t_d).
\]
\end{Lem}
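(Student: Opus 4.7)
The plan is to establish the identity coefficient-by-coefficient in $\mathcal{M}_{k}[[t_{1},\ldots,t_{d}]]$. Both sides are generating functions indexed by $\underline{n}\in S$ whose coefficients encode motivic data attached to the valuation stratum at level $\underline{n}$; the lemma thus reduces to showing that $[\mathcal{I}_{\underline{n}}]\,\mathbb{L}^{-\|\underline{n}\|}$ differs from the $\underline{n}$-th coefficient of $P_{g}$ by the $\underline{n}$-independent factor $\mathbb{L}^{\delta+1}$.

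First, I would expand $[\mathcal{I}_{\underline{n}}]$ using the description of principal ideals as $\mathcal{O}_{P,C}^{*}$-orbits. The map $z\mapsto z\mathcal{O}_{P,C}$ realises $\mathcal{I}_{\underline{n}}$ as the quotient of $\{z\in\mathcal{O}_{P,C}:\underline{v}(z)=\underline{n}\}$ by the free action of $\mathcal{O}_{P,C}^{*}$, so in $\mathcal{M}_{k}$
\[
[\mathcal{I}_{\underline{n}}]\cdot[\mathcal{O}_{P,C}^{*}]=\bigl[\{z\in\mathcal{O}_{P,C}:\underline{v}(z)=\underline{n}\}\bigr].
\]
Next, I would recall from \cite{CDG4,MM} that the $\underline{n}$-th coefficient of $P_{g}$ is (up to the normalisation built into its definition) precisely the class of the valuation stratum appearing on the right-hand side above. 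Finally, I would compute $[\mathcal{O}_{P,C}^{*}]$ in $\mathcal{M}_{k}$ by stratifying the units via the residue field together with the principal units, exploiting the presentation $\widetilde{\mathcal{O}}_{P,C}\cong F[[t_{1}]]\times\cdots\times F[[t_{d}]]$ and the identity $\dim_{k}\widetilde{\mathcal{O}}_{P,C}/\mathcal{O}_{P,C}=\delta$. Putting the three ingredients together should yield the claimed factor $\mathbb{L}^{\delta+1}$.

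The main obstacle is this last computation: identifying $[\mathcal{O}_{P,C}^{*}]$ in terms of $\mathbb{L}$ and $\delta$, and checking that this class, combined with the normalisation $\mathbb{L}^{-\|\underline{n}\|}$ from (\ref{zeta1}) and the corresponding normalisation of $P_{g}$ in \cite{CDG4,MM}, really consolidates into a uniform $\mathbb{L}^{\delta+1}$. This is a finite-dimensional calculation in the quotient of $\mathcal{O}_{P,C}$ by a sufficiently deep power of its conductor, where both unit groups become algebraic; but keeping track of the split between the $(\mathbb{L}-1)$-torus coming from the residue field and the $\delta$-dimensional affine part coming from the gap $\widetilde{\mathcal{O}}_{P,C}/\mathcal{O}_{P,C}$ is the delicate point. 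Once that bookkeeping is settled, summing over $\underline{n}\in S$ gives the stated identity.
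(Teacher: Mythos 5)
Your overall strategy --- stratify $\mathcal{O}_{P,C}$ by the value $\underline{v}$, identify $\mathcal{I}_{\underline{n}}$ with the quotient of the value stratum by the free action of the units, and extract the constant from the motivic volume of the unit group --- is the right one; it is essentially the computation carried out in \cite{MZ} and \cite{MM}, to which the paper defers (the paper itself gives no argument for this Lemma, and later quotes its integral form $Z(T,\mathcal{O}_P)=\frac{1}{(\mathbb{L}-1)\mathbb{L}^{-\delta_P-1}}\int_{\mathcal{O}_P}T^{\left\Vert\underline{v}(\underline{z})\right\Vert}d\mu$, which together with the projectivization relation $(\mathbb{L}-1)\int_{\mathbb{P}\mathbb{A}_X}=\int_{\mathbb{A}_X}$ is your orbit decomposition repackaged). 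However, your pivotal displayed identity $[\mathcal{I}_{\underline{n}}]\cdot[\mathcal{O}_{P,C}^{*}]=[\{z\in\mathcal{O}_{P,C}:\underline{v}(z)=\underline{n}\}]$ is not an equation in $\mathcal{M}_k$ as written: neither $\mathcal{O}_{P,C}^{*}$ nor the value stratum is a variety, both are infinite-dimensional cylinders, and they only acquire meaning as motivic measures after truncating beyond the conductor and renormalizing by a power of $\mathbb{L}$ --- and those renormalizing powers are precisely where the constant you are after lives. Moreover, even at a fixed truncation level, passing from a free action to a product of classes in the Grothendieck ring requires the quotient map onto $\mathcal{I}_{\underline{n}}$ to be piecewise trivial; this does hold (the truncated unit group is an extension of $\mathbb{G}_m$ by a unipotent group, hence special), but it must be invoked, not elided by ``so''.

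The genuine gap is that the decisive computation is exactly the one you postpone. Three normalizations must be matched, and none is carried out: (i) the volume of the units, $\mu(\mathcal{O}_{P,C}^{*})=(1-\mathbb{L}^{-1})\mathbb{L}^{-\delta}$ once $\widetilde{\mathcal{O}}_{P,C}$ is normalized to have volume $1$ (you never state this value); (ii) the behaviour of the measure under multiplication by a generator $z$ with $\underline{v}(z)=\underline{n}$, namely $\mu(z\,\mathcal{O}_{P,C}^{*})=\mathbb{L}^{-\left\Vert\underline{n}\right\Vert}\mu(\mathcal{O}_{P,C}^{*})$: this change-of-variables step is what produces the $\underline{n}$-dependence matching the factor $\mathbb{L}^{-\left\Vert\underline{n}\right\Vert}$ in (\ref{zeta1}), and it is absent from your sketch --- without it neither the truncation level nor the $\underline{n}$-dependence has any reason to cancel; (iii) the exact normalization in the definition of $P_g$ in \cite{CDG4} and \cite{MM}, where one integrates over the projectivization by the nonzero constants only (contributing a factor $\mathbb{L}-1$, not the full unit volume) and each cylinder carries its own power of $\mathbb{L}$ fixed by the truncation. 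Your phrase ``up to the normalisation built into its definition'' refers to precisely the quantity the Lemma asserts, so it cannot be cited away; and a proof that ends with ``once that bookkeeping is settled'' has assembled the right ingredients but not established the exponent $\delta+1$. To complete the argument you must perform the finite-dimensional count at a fixed truncation level and exhibit the cancellation explicitly, or simply derive the Lemma from the two integral identities of \cite{MZ} quoted above.
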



The generalised Poincar\'e series associated to a ring $\oo_{P,C}$ only depends on the value semigroup of $\oo$, hence by Theorem \ref{lemma:semigroups} we have

\begin{Cor}
\[
Z(t_1, \ldots , t_d;\oo_{P,C}) = Z(t_1, \ldots , t_d; \oo_{P, C \mathrm{~mod~} \mathfrak{P}})
\]
\[
P_g(t_1, \ldots , t_d; \oo_{P,C}) = P_g(t_1, \ldots , t_d; \oo_{P, C \mathrm{~mod~} \mathfrak{P}})
\]
\end{Cor}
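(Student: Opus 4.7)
The plan is to reduce both identities to the semigroup-level statement of Theorem \ref{lemma:semigroups}, by exploiting the fact that both $Z$ and $P_g$ are encoded, up to a factor of a power of $\mathbb{L}$, in the value semigroup of $\oo_{P,C}$. The starting ingredient is Theorem \ref{lemma:semigroups}: for all except a finite number of maximal ideals $\mathfrak{P}$ of $A$ one has
\[
S(\oo_{P,C}) \;=\; S(\oo_{P,\, C \mathrm{~mod~}\mathfrak{P}}).
\]

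For the equality of generalised Poincaré series I would invoke the assertion made just before the statement of the corollary, namely that $P_g(t_1,\ldots,t_d)$ depends only on the value semigroup $S$. This is the content of the Campillo--Delgado--Gusein-Zade result cited as \cite{CDG4}, extended by the author in \cite{MM}. Substituting the semigroup equality above then yields the second identity of the corollary for almost all $\mathfrak{P}$, with no further work.

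For the equality of the zeta functions $Z$, I would apply Lemma \ref{lemma:coincidence}, which reads
\[
Z(t_1,\ldots,t_d) \;=\; \mathbb{L}^{\delta+1}\, P_g(t_1,\ldots,t_d).
\]
Since $P_g$ already agrees on the two rings by the previous paragraph, it suffices to check that the $\delta$-invariant $\delta_P=\dim_{k}\widetilde{\oo}_{P,C}/\oo_{P,C}$ is also preserved under reduction. For this I would use that $\delta_P$ is itself a combinatorial invariant of the value semigroup: in the unibranch case it is the number of gaps of $S$, and in the totally rational multi-branch situation the analogous formula is obtained from Delgado's description of the symmetry of $S$. Hence $\delta$ is preserved under reduction for almost all $\mathfrak{P}$, and combining with the identity for $P_g$ gives the identity for $Z$.

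The one point that deserves real attention, and which I regard as the main potential obstacle, is precisely the verification that $\delta$ is a genuine semigroup invariant in the multi-branch setting. Under the totally rational hypothesis assumed throughout the paper this is classical, but without that assumption one would need to keep track of residue-field extension degrees at the branches when reducing modulo $\mathfrak{P}$. Once this is in place, the remainder of the argument is a transparent substitution of Theorem \ref{lemma:semigroups} into Lemma \ref{lemma:coincidence}.
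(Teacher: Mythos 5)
Your proposal is correct and follows essentially the same route as the paper, which justifies the corollary by the single observation that $P_g$ depends only on the value semigroup together with Theorem \ref{lemma:semigroups}, and passes to $Z$ via Lemma \ref{lemma:coincidence}. Your extra care in noting that $\delta$ is itself determined by the value semigroup (in the totally rational case) only makes explicit a step the paper leaves implicit.
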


\section{Motivic integration over the ring of Ad\`eles}

The goal of this section is to define a global zeta function on the curve by using the theory of Ad\`eles such that it can be expressed---roughly speaking---as a product of local zeta functions (Poincar\'e series), from which the ones corresponding to singular points are non-trivial. A good general reference here is the book of Cassels and Fr\"olich \cite{CF}.
\medskip

Let $X$ be a curve defined over $k$. Consider the family of
local rings $\{\mathcal{O}_{P,X}\}_{P \in X}$. From now on we will write $\mathcal{O}_P$ instead of $\mathcal{O}_{P,X}$. If $k(P)$ is the residue
field at each point $P \in X$, we have
\[
\widehat{\mathcal{O}}_{P} \cong k(P)[\![T]\!]
\]
\[
\widehat{K}_P \cong k(P)(\!(T)\!)
\]
for every $P \in X$. We want to establish a measure in
$\widehat{K}_P$ for $P \in X$. Take the class $\mathcal{A}_P$ of
subsets of $X$ which are bounded from below; i.e., $Z \in
\widehat{K}_P$ is said to be bounded from below if
``$\mathrm{ord}_T (x) \ge \mathrm{~constant}$'' for every $x \in
Z$.
\medskip

Let $n \in \mathbb{Z}$. Consider the map
\[
\pi_n:\widehat{K}_P \to k(P)(\!(T)\!)/(T^{n+1}).
\]

Notice that
\[
k(P)(\!(T)\!)/(T^{n+1}) \cong \left \{ \sum_{k=-c}^{n} a_k T^k
\mid a_k \in k(P) \right \} \cong k(P)^{n+c+1}
\]
where the latter isomorphism holds because the subsets are bounded
from below.
\medskip

We say that a subset $Z \subseteq \widehat{K}_P$, $Z \in
\mathcal{A}_P$ is measurable or cylindric if there exists $n \in
\mathbb{Z}$ such that $Z=\pi_n^{-1}(Y)$, for $Y \subseteq
k(P)(\!(T)\!)/(T^{n+1})$, $Y$ constructible. We define the measure
of such a $Z$ as
\[
\mu_P (Z):=[Y] \mathbb{L}^{-n}.
\]
We define the ring of ad\`eles $\mathbb{A}_X$ of $X$ as the
restricted product of the $\widehat{K}_P $'s with respect to the
$\widehat{\mathcal{O}}_P$'s, i.e., $x \in \mathbb{A}_X$ if and
only if $x=(x_P)_{P \in X}$, $x_P \in \widehat{\mathcal{O}}_P$ for
almost all $P$. Let $S$ be a finite subset of points. Define
\[
U_S:=Z \times \prod_{P \notin S} \widehat{\mathcal{O}}_P.
\]
We declare $U_S$ as an open subset, where $Z$ is a cylindric
subset of $\prod_{P \in S} \widehat{K}_P$, and take as measure
\[
\mu_S (Z):= \mu_S (\underline{Z}),
\]
where $\mu_S (\underline{Z})$ is defined as follows: let
\[
\underline{Z} \subseteq \prod_{P \in S} \widehat{K}_P
\]
with $\underline{Z}$ bounded from below (i.e., the order in $T$ of
each component is bounded from below); we say that $\underline{Z}$
is measurable or cylindric if there exists $\underline{n}=(n_P)_{P
\in S}$, $n_P \in \mathbb{Z}$ so that, for the map
\[
\pi_{\underline{n}}: \prod_{P \in S} \widehat{K}_P \to \prod_{P
\in S} k(P)(\!(T)\!) /(T^{n_P+1})
\]
we have $\underline{Z} = \pi^{-1}_{\underline{n}}
(\underline{Y})$, with $\underline{Y}$ a constructible subset of
\[
\prod_{P \in S} k(P)(\!(T)\!) /(T^{n_P+1}).
\]

The measure of $\underline{Z}$ is defined to be
\[
\mu (\underline{Z}):= \mu_S (\underline{Z}):=[\underline{Y}]
\mathbb{L}^{-\sum_{P \in S} n_P}.
\]

We declare $U_S$ as the open subsets and endow $\mathbb{A}_X$ with
a topology. Take the Borel $\sigma$-algebra generated by the
subsets $U_S$ which are bounded from below. Consider the maps
\[
\phi: \mathbb{A}_X \to \mathbb{Z}.
\]
The map $\phi$ is said to be cylindric if $\phi^{-1}(n) \subseteq
\mathbb{A}_X$ is a cylindric subset and $\phi$ is bounded from
below. Last we define the integral of a cylindric function $\phi:
\mathbb{A}_X \to \mathbb{Z}$ to be
\[
\int_{\mathbb{A}_X} T^{\phi} d \mu := \sum_{n \in \mathbb{Z}} \mu
(\phi^{-1}(n)) T^n,
\]
whenever the sum makes sense; in such a case the function $\phi$
is said to be integrable.
\medskip

We give now the projective versions of the above definitions. Let
$\mathbb{P}\widehat{K}_P$ be the projectivization of
$\widehat{K}_P$ with respect to the field $K$, that is,
$\mathbb{P}\widehat{K}_P:= (\widehat{K}_P \setminus \{0\}) /
\sim$, where the equivalence relation $\sim$ is defined as
follows: for every $a,b \in \widehat{K}_P$, we say that $a \sim b$
if and only if there exists $\lambda \in K \setminus \{0\}$ so
that $a = \lambda b$. From now on, all projectivizations we use
will be referred to $K$.
\medskip

Let $n \in \mathbb{Z}$. Let us consider the projectivization
$\mathbb{P} (k(P)(\!(T)\!)/(T^{n+1}))$ and let us adjoin one point,
that is, 
\[
\mathbb{P}^{\ast} (k(P)(\!(T)\!)/(T^{n+1})):=\mathbb{P}
(k(P)(\!(T)\!)/(T^{n+1})) \cup \{\ast\},
\]
with $\ast$ representing
the added point making sense the definition, so that the map
\[
\pi_n:\mathbb{P} \widehat{K}_P \to \mathbb{P}^{\ast}
(k(P)(\!(T)\!)/(T^{n+1}))
\]
is well-defined; i.e., for each $g \in \widehat{K}_P$, we denote
$[g]$ the class of $g$ in $\mathbb{P}\widehat{K}_P$. If $g \in
\widehat{K}_P \setminus (T^{n+1})$, then $\pi_n ([g]) \in
\mathbb{P} (k(P) (\!(T)\!)/(T^{n+1}))$, and if $g \in (T^{n+1})$,
then $\pi_n ([g]) \in \{ \ast \}$.
\medskip

Notice that if $Z \in \mathcal{A}_X$, then $\mathbb{P}Z \in
\mathcal{A}_X$, because
\[
\mathrm{ord}_T (x)= \mathrm{ord}_T (\lambda x)
\]
for all $x \in Z$ and all $\lambda \ne 0$. Thus, given a subset $Z
\subseteq \mathbb{P} \widehat{K}_P$, $Z \in \mathcal{A}_X$, $Z$ is
said to be cylindric or measurable if there exists $n \in
\mathbb{Z}$ so that
\[
Z = \pi_n^{-1} (Y)
\]
with $Y \subseteq \mathbb{P} k(P)(\!(T)\!)/(T^{n+1})$
constructible. Then we define the measure of such a $Z$ as
\[
\mu_P (Z):=[Y] \mathbb{L}^{-n}.
\]
Let us take now a finite subset of points $S$ of $X$ and define
\[
V_S = Z \times \mathbb{P} \prod_{P \notin S}
\widehat{\mathcal{O}}_P,
\]
where $Z$ is a cylindric subset of $\mathbb{P} \prod_{P \in S}
\widehat{K}_P$, and declare $V_S$ as an open subset of
$\mathbb{P}\mathbb{A}_X$, where $\mathbb{P}\mathbb{A}_X$ is the
projectivization of the ring of ad\`eles $\mathbb{A}_X$ of $X$. As
measure we take
\[
\mu_S (Z):= \mu_S (\underline{Z}),
\]
where $\mu_S (\underline{Z})$ is defined as follows. Let
$\underline{Z} \in \mathbb{P} \prod_{P \in S} \widehat{K}_P$,
$\underline{Z}=\mathbb{P} \underline{Z}^{\prime}$ with
$\underline{Z}^{\prime} \in \prod_{P \in S} \widehat{K}_P$ such
that the order function is bounded from below in every component.
We say that $\underline{Z}$ is measurable or cylindric if there
exists $\underline{n}=(n_P)_{P \in S}$, $n_P \in \mathbb{Z}$ such
that $\underline{Z} = \pi^{-1}_{\underline{n}} (\underline{Y})$
for
\[
\pi_{\underline{n}}: \mathbb{P} \left (\prod_{P \in S}
\widehat{K}_P \right ) \to \mathbb{P} \left (\prod_{P \in S}
k(P)(\!(T)\!) /(T^{n_P+1}) \right )
\]
and $\underline{Y}$ a constructible subset of $\mathbb{P} \left
(\prod_{P \in S} k(P)(\!(T)\!) /(T^{n_P+1}) \right )$. Hence we
define
\[
\mu_S (\underline{Z}):=\mu (\underline{Z}):=[\underline{Y}]
\mathbb{L}^{-\sum_{P \in S} n_P}.
\]

The projectivization $\mathbb{P} \mathbb{A}_X$ is endowed with the
topology inherited from $\mathbb{A}_X$.
\medskip

Analogous to the non projectivised case, a map
\[
\varphi: \mathbb{P}\mathbb{A}_X \to \mathbb{Z}
\]
is said to be cylindric if $\varphi^{-1}(n)
\subseteq \mathbb{P} \mathbb{A}_X$ is a cylindric subset and
$\varphi$ is bounded from below. Lastly, we define the integral of
a cylindric function $\varphi: \mathbb{P} \mathbb{A}_X \to
\mathbb{Z}$ to be
\[
\int_{\mathbb{P} \mathbb{A}_X} T^{\varphi} d \mu := \sum_{n \in
\mathbb{Z}} \mu (\varphi^{-1}(n)) T^n,
\]
whenever the sum makes sense; in such a case the function
$\varphi$ is said to be integrable.
\medskip

Notice that the function $\phi: \mathbb{A}_X \to \mathbb{Z}$ is cylindric if and only if the function $\varphi: \mathbb{P}\mathbb{A}_X \to \mathbb{Z}$ is cylindric, and we have
\[
(\mathbb{L}-1) \int_{\mathbb{P}\mathbb{A}_X} T^{\varphi} d \mu = \int_{\mathbb{A}_X} T^{\phi} d \mu.
\]
(cf. \cite[Remark 4]{MZ}).

Let us take the cylindric function $T^{\mid \underline{v}(\cdot)\mid}:\oo \to \mathbb{Z}[\![T]\!]$ defined by $\underline{z} \mapsto T^{\left\Vert \underline{v}(\underline{z})  \right\Vert}$, with $T^{\left\Vert \underline{v}(\underline{z})  \right\Vert}:=0$ if $\underline{v}(\underline{z})=\infty$.

\begin{Defi}
The integral
\[
\zeta (X, T):= \int_{\mathbb{A}_X} T^{\left\Vert \underline{v}(\underline{z})  \right\Vert} d \mu 
\]
will be called the ad\'elic zeta function associated with $X$.
\end{Defi}

We want now to show that this global zeta function decomposes into the product of the ad\'elic zeta function of the normalisation with the local zeta functions corresponding to the singular points of the curve, which generalises slightly a result of Z\'{u}\~{n}iga (cf. \cite[Corollary 2.2]{Z3}).
\medskip

Let $P \in S=\{P_1, \ldots , P_r\}$. Let us write $Z_P = \pi^{-1}_{n_P}(Y_P)$ for $Y_P$ a constructible subset of $k(P)(\!(T)\!) /(T^{n_P+1})$. Then one has
\begin{eqnarray}
\mu (\underline{Z}) & = & [Y_{P_1} \times \ldots \times Y_{P_r}] \mathbb{L}^{-n_{P_1}- \ldots - n_{P_r}} = [Y_{P_1}] \mathbb{L}^{-n_{P_1}} \cdot \ldots \cdot [Y_{P_r}] \mathbb{L}^{-n_{P_r}}   \nonumber \\
&=& \mu (Z_{P_1}) \cdot \ldots \cdot \mu (Z_{P_r}).  \nonumber
\end{eqnarray}

Denoting $\phi_i$ the restriction $\phi \mid_{\widehat{\oo}_{P_i}}: \widehat{O}_{P_i} \to \mathbb{Z}$, then we have
\[
\int_{Z} T^{\phi} d \mu = \int_{\widehat{\oo}_{P_1}} T^{\phi_1} d \mu \cdots \ldots \cdot \int_{\widehat{\oo}_{P_r}} T^{\phi_r} d \mu.
\]

Because of the multiplicativity of the measure $\mu (\cdot)$ one has 
\[
\mu (\mathbb{A}_X)=\mu (Z \times \prod_{P \notin S} \oo_P) = \mu (Z) \mu (\prod_{P \notin S} \oo_P),
\]
hence
\begin{eqnarray}
\zeta (X,T)&=&\int_{Z \times \prod_{P \notin S} \oo_P} T^{\phi} d \mu = \int_{Z} T^{\phi} d \mu \cdot \int_{\prod_{P \notin S} \oo_P} T^{\phi} d \mu \nonumber \\
&=&  \int_{Z} T^{\phi} d \mu \cdot \int_{\widetilde{X}} T^{\phi} d \mu. \nonumber
\end{eqnarray}

Furthermore, by \cite[Corollary 4]{MZ} one has
\[
Z(\oo_P,T)= \frac{1}{(\mathbb{L}-1)\mathbb{L}^{-\delta_P -1}} \int_{\oo_P} T^{\left\Vert \underline{v}(\underline{z})  \right\Vert} d \mu.
\]
We have thus proved:

\begin{Theo} \label{Theo:main}
Let $\delta:=\delta_{P_1} + \ldots + \delta_{P_r}$. We have
\[
\mathbb{L}^{\delta} \zeta(X, T)=(1-\mathbb{L}^{-1})^r \cdot \zeta(\widetilde{X},T) \cdot \prod_{i=1}^{r}Z(T,\oo_{P_i}).
\]
\end{Theo}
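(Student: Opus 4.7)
The strategy is already prepared by the displayed computations just before the statement: I factorise the adèlic integral according to the singular set, then convert each singular factor via the local formula of \cite{MZ} that has been quoted.

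I would start by setting $S=\{P_1,\dots,P_r\}$ equal to the set of singular points of $X$ and using the decomposition $\mathbb{A}_X=\bigl(\prod_{P\in S}\widehat{\oo}_P\bigr)\times\prod_{P\notin S}\widehat{\oo}_P$ built into the restricted product. The function $\phi=\left\Vert\underline{v}(\cdot)\right\Vert$ is additive in the local components, so $T^{\phi}$ factors as a product of local functions $T^{\phi_P}$. Combined with the multiplicativity $\mu(U_1\times U_2)=\mu(U_1)\mu(U_2)$ recorded above the statement, this gives the Fubini-type splitting
\[
\zeta(X,T)=\Bigl(\prod_{i=1}^{r}\int_{\widehat{\oo}_{P_i}}T^{\left\Vert\underline{v}(\underline{z})\right\Vert}d\mu\Bigr)\cdot\int_{\prod_{P\notin S}\widehat{\oo}_P}T^{\left\Vert\underline{v}(\underline{z})\right\Vert}d\mu,
\]
in which the second factor is by definition $\zeta(\widetilde X,T)$, since $\oo_{P,X}=\oo_{P,\widetilde X}$ for every $P\notin S$.

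Next, I would apply to each $i$ the local identity \cite[Corollary 4]{MZ} already recalled,
\[
\int_{\widehat{\oo}_{P_i}}T^{\left\Vert\underline{v}(\underline{z})\right\Vert}d\mu=(\mathbb{L}-1)\,\mathbb{L}^{-\delta_{P_i}-1}\,Z(T,\oo_{P_i}),
\]
substitute the result into the splitting and collect the exponents of $\mathbb{L}$ via $\delta=\delta_{P_1}+\cdots+\delta_{P_r}$, obtaining
\[
\zeta(X,T)=(\mathbb{L}-1)^{r}\,\mathbb{L}^{-\delta-r}\,\zeta(\widetilde X,T)\prod_{i=1}^{r}Z(T,\oo_{P_i}).
\]
Multiplying by $\mathbb{L}^{\delta}$ and rewriting $(\mathbb{L}-1)^{r}\mathbb{L}^{-r}=(1-\mathbb{L}^{-1})^{r}$ delivers the claimed identity.

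The only real obstacle is the clean identification of the off-$S$ integral with $\zeta(\widetilde X,T)$: at a singular point $P_i$ the normalisation splits $P_i$ into several branches, so the adèles of $\widetilde X$ have more local factors at the preimages of $P_i$ than the integration domain $\prod_{P\notin S}\widehat{\oo}_P$ retains. One must check that these extra branches are exactly the ones whose contribution has been absorbed into the local zeta factors $Z(T,\oo_{P_i})$, so that the equality proposed by the author immediately below the theorem indeed holds place by place. Every remaining step is a formal consequence of the multiplicativity of $\mu$ and of the single-point formula \cite[Corollary 4]{MZ}.
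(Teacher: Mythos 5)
Your proposal follows essentially the same route as the paper: factor the ad\'elic integral over the singular set $S$ using the multiplicativity of $\mu$, identify the off-$S$ factor with $\zeta(\widetilde{X},T)$, apply \cite[Corollary 4]{MZ} at each $P_i$, and collect the powers of $\mathbb{L}$ to get $(1-\mathbb{L}^{-1})^r$ after multiplying by $\mathbb{L}^{\delta}$. Even the point you flag as the ``only real obstacle''---the identification $\int_{\prod_{P\notin S}\widehat{\oo}_P}T^{\phi}d\mu=\zeta(\widetilde{X},T)$---is treated in the paper exactly as you treat it, namely asserted without further elaboration, so your argument matches the published one step for step.
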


By taking reduction modulo $\mathfrak{P}$ we obtain:

\begin{Cor} \label{cor:penultimo}
Let $X^{\prime}:= X \text{ mod } \mathfrak{P}$. We have:
\[
\mathbb{L}^{\delta}  \zeta(X^{\prime}, T)= (1-\mathbb{L}^{-1})^r \cdot \zeta(\widetilde{X^{\prime}},T) \cdot \prod_{i=1}^{r} Z(T,\oo_{P_i, X^{\prime}}).
\]
\end{Cor}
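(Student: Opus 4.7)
The strategy is to apply Theorem \ref{Theo:main} directly to the reduced curve $X' := X \bmod \mathfrak{P}$, which is itself a curve over the residue field, and then reconcile the numerical invariants appearing on the right-hand side with those attached to $X$.

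Concretely, the first step is to invoke Theorem \ref{Theo:main} verbatim for $X'$ over $\overline{K}$. Let $P_1',\ldots,P_{r'}'$ denote the singular points of $X'$ and set $\delta' := \sum_{i=1}^{r'}\delta_{P_i',X'}$. The theorem immediately yields
\[
\mathbb{L}^{\delta'}\,\zeta(X',T) \;=\; (1-\mathbb{L}^{-1})^{r'} \cdot \zeta(\widetilde{X'},T) \cdot \prod_{i=1}^{r'} Z(T,\oo_{P_i',X'}).
\]
Hence it suffices to prove that $r' = r$ and $\delta' = \delta$ for all but finitely many maximal ideals $\mathfrak{P}$, and that the indexing of the singular points may be chosen so that $P_i'$ is the reduction of $P_i$.

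For the equality $r' = r$ and the bijective correspondence between singular points, Theorem \ref{thm:denef} supplies, for almost all $\mathfrak{P}$, a resolution of $C$ over $F$ with good reduction modulo $\mathfrak{P}$; Proposition \ref{prop:agreeprocess} then guarantees that the resolution processes of $X$ and $X'$ agree. In particular the singularities of $X$ are in canonical bijection with those of $X'$, so $r' = r$ and the labeling can be matched. For the equality $\delta' = \delta$, Theorem \ref{lemma:semigroups} provides $S(\oo_{P_i,X}) = S(\oo_{P_i',X'})$ for almost every $\mathfrak{P}$; since $\delta_P$ coincides with the number of gaps of the value semigroup (the conductor being finite by \cite[Theorem 1]{Ros1}), this yields $\delta_{P_i,X} = \delta_{P_i',X'}$ for each $i$, and summing over $i$ produces $\delta = \delta'$.

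Substituting $r' = r$ and $\delta' = \delta$ into the displayed identity completes the argument. I do not expect a genuine obstacle here: all the substance is already encapsulated in the earlier theorems, and the only subtlety is administrative, namely verifying that the three finite sets of ``bad'' primes -- from Denef's Theorem \ref{thm:denef}, from Theorem \ref{lemma:semigroups}, and from the matching of the finitely many singular points -- can be discarded simultaneously; this is automatic since each contributes only finitely many exceptions and the number of singular points is finite.
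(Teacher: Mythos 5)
Your proposal is correct and takes essentially the same route as the paper, which states the corollary without proof as an immediate consequence of Theorem \ref{Theo:main} applied to $X \bmod \mathfrak{P}$, combined with the invariance under reduction (for almost all $\mathfrak{P}$) of the resolution process, the value semigroups, and hence of $r$, $\delta$ and the local zeta functions (Theorems \ref{thm:denef}, \ref{thm:uno}, \ref{lemma:semigroups} and Proposition \ref{prop:agreeprocess}). One minor wording caveat: for a point with $d>1$ branches, $\delta_P$ is not literally the number of gaps of $S\subseteq\mathbb{N}^d$ (that complement is infinite); rather $\delta_P$ is still determined by the value semigroup (equivalently, it can be read off from the invariance of $Z$ and $P_g$ via Lemma \ref{lemma:coincidence}), which is all your argument actually needs.
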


Theorem \ref{Theo:main} extends to the result of \cite{Z3} if one specialises $[\cdot ]$ to the additive invariant $\sharp: \mathrm{Var}_k \to \mathbb{Z}$ given by counting points by taking a finite field $\mathbb{F}_q$ as ground field. 
\medskip

Notice that our definition differs from the one introduced by Z\'{u}\~{n}iga in \cite{Z3}; he considered there a Dirichlet series $Z(\mathrm{Ca}(X),T)$ associated to the effective Cartier divisors on the algebraic curve defined over $\mathbb{F}_q$. Both series are related by means of the equality
\[
\sharp (\zeta(X, T)) =\frac{(1-q^{-1})^r}{q^{\delta}} Z(\mathrm{Ca}(X),T).
\]

We can express the formula given by Corollary \ref{cor:penultimo} in another nice way.
Let $P$ be a point on $X$. Let us consider the group $U_{\widetilde{\oo}_P}$ of the units of the normalisation of the local ring $\oo_P$. The subgroup $U_{\oo_P}$ of the group $U_{\widetilde{\oo}_P}$ is of finite index, say $(U_{\widetilde{\oo}_P}:U_{\oo_P})$. In fact, by taking the normalisation morphism $\pi:\widetilde{X} \to X$, one has $\pi^{-1}(P)=\{Q_1, \ldots , Q_m\}$, where $Q_i$ are the branches of $X$ centered at $\oo_P$ and correspond to maximal ideals $\mathfrak{m}_i$ of the semilocal ring $\widetilde{\oo_P}$; furthermore the following well-known equality holds (cf. \cite[p.182]{stohr}):
\[
(U_{\widetilde{\oo}_P}:U_{\oo_P})=\frac{q^{\delta_P}}{(1-q^{-r_P})} \prod_{i=1}^{m} (1-q^{-\mathrm{deg}(Q_i)}),
\]
where $r_P$ is the degree of the residue field of $\oo_P$ over $\mathbb{F}_q$, and the degree $\mathrm{deg}(Q_i):=\dim \widetilde{\oo}_P/\mathfrak{m}_i$, for every $i \in \{1, \ldots , m\}$. Since we are assuming the local rings $\oo_P$ to be totally rational, both the integers $r_P$ and $\mathrm{deg}(Q_i)$, for all $i \in \{1, \ldots , m\}$, are equal to $1$. By applying Corollary \ref{cor:penultimo} we get:
\begin{Cor}
\[
\sharp (\zeta(X, T))= \sharp (\zeta(\widetilde{X},T)) \cdot \prod_{P \in S} \prod_{Q \in \pi^{-1}(P)} \frac{1-q^{-1}}{(U_{\widetilde{\oo}_P}:U_{\oo_P})} \cdot Z(T,\oo_{P_i}).
\]
\end{Cor}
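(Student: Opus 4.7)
The plan is to obtain the stated identity as a direct specialisation of Corollary \ref{cor:penultimo} under the additive invariant $\sharp: \mathrm{Var}_k \to \mathbb{Z}$, combined with the explicit unit-index formula recalled just above the statement. Since $\sharp$ sends $\mathbb{L}$ to $q$, applying it to the equality
\[
\mathbb{L}^{\delta}\,\zeta(X^{\prime},T) = (1-\mathbb{L}^{-1})^{r}\cdot \zeta(\widetilde{X^{\prime}},T)\cdot \prod_{i=1}^{r} Z(T,\oo_{P_i,X^{\prime}})
\]
yields $q^{\delta}\sharp(\zeta(X,T)) = (1-q^{-1})^{r}\,\sharp(\zeta(\widetilde{X},T))\,\prod_{i=1}^{r}\sharp(Z(T,\oo_{P_i}))$, after using Theorem \ref{lemma:semigroups} to identify the $Z(T,\oo_{P_i,X^{\prime}})$-factors with $Z(T,\oo_{P_i})$ up to finitely many $\mathfrak{P}$. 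Solving for $\sharp(\zeta(X,T))$ and distributing $q^{-\delta}=\prod_{P\in S}q^{-\delta_P}$ and $(1-q^{-1})^{r}=\prod_{P\in S}(1-q^{-1})$ across the singular locus gives
\[
\sharp(\zeta(X,T)) = \sharp(\zeta(\widetilde{X},T))\cdot \prod_{P\in S}\frac{1-q^{-1}}{q^{\delta_P}}\,\sharp(Z(T,\oo_P)).
\]

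Next I would rewrite each local factor $(1-q^{-1})/q^{\delta_P}$ in terms of the unit index using the formula
\[
(U_{\widetilde{\oo}_P}:U_{\oo_P}) = \frac{q^{\delta_P}}{1-q^{-r_P}}\prod_{Q\in\pi^{-1}(P)}(1-q^{-\deg(Q)}),
\]
which in the totally rational setting ($r_P=1$ and $\deg(Q)=1$) collapses to $(U_{\widetilde{\oo}_P}:U_{\oo_P})=q^{\delta_P}(1-q^{-1})^{m_P-1}$, where $m_P=|\pi^{-1}(P)|$. Substituting gives
\[
\frac{1-q^{-1}}{q^{\delta_P}} = \frac{(1-q^{-1})^{m_P}}{(U_{\widetilde{\oo}_P}:U_{\oo_P})} = \frac{1}{(U_{\widetilde{\oo}_P}:U_{\oo_P})}\prod_{Q\in\pi^{-1}(P)}(1-q^{-1}).
\]
Inserting this into the previous display reproduces precisely the claimed identity.

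There is really no substantive obstacle; the content of the corollary is bookkeeping, using only (a) the ring homomorphism property of $\sharp$ applied to Theorem \ref{Theo:main}/Corollary \ref{cor:penultimo}, and (b) the specialisation of the index formula to the totally rational case. The only small subtlety worth emphasising in the writeup is that, because we have assumed total rationality, every branch $Q\in\pi^{-1}(P)$ contributes a factor $1-q^{-1}$, which is what allows the exponent $m_P$ to be rewritten as the product $\prod_{Q\in\pi^{-1}(P)}(1-q^{-1})$ appearing in the statement.
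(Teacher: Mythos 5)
Your proposal is correct and follows essentially the same route as the paper, which likewise obtains the corollary by applying the counting specialisation $\sharp$ (sending $\mathbb{L}\mapsto q$) to Corollary \ref{cor:penultimo} and substituting St\"ohr's unit-index formula in the totally rational case $r_P=\deg(Q_i)=1$. Your bookkeeping even makes explicit what the paper leaves implicit, namely that the index $(U_{\widetilde{\oo}_P}:U_{\oo_P})$ enters once per singular point $P$ while the factor $1-q^{-1}$ enters once per branch $Q\in\pi^{-1}(P)$, which is the intended reading of the displayed product.
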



\begin{thebibliography}{99}                                                                                               %



\bibitem {B} Bayer,V.: \emph{Semigroup of two irreducible algebroid plane curves}. Manuscripta Math. \textbf{39} (1985),
207--241.

\bibitem {C} Campillo, A.: \emph{Algebroid curves in positive
characteristic}. Lecture Notes in Math. \textbf{813}. Springer,
Berlin-Heidelberg, 1980.

\bibitem {CDG1}Campillo~A., Delgado,~F., and Gusein-Zade,~S.M.:  \emph{The
Alexander polynomial of a plane curve singularity via the ring of
functions on it}. Duke Math. J. \textbf{117} (2003), no. 1,
125--156.

\bibitem {CDG2}Campillo~A., Delgado,~F., and Gusein-Zade,~S.M.: \emph{The
Alexander polynomial of a plane curve singularity and integrals
with respect to the Euler characteristic}. Internat. J. Math.
\textbf{14} (2003), no. 1, 47--54.

\bibitem {CDG4}Campillo~A., Delgado,~F., and Gusein-Zade,~S.M.: 
\emph{Multi-index filtrations and Generalized Poincar\'e series}.
Monatshefte f\"ur Math \textbf{150} (2007), 193--209.

\bibitem {CF}Cassels J.W.L., Fr\"olich, A.:
\emph{Algebraic Number Theory}. Thomson Book Company Inc., Washington D.C., 1967.

\bibitem {Del2}Delgado de la Mata, F.: \emph{Gorenstein curves and symmetry of
the semigroup of values}. Manuscripta Math. \textbf{61} (1988),
no. 3, 285--296.

\bibitem {DM}Delgado~de~la~Mata,~F.,~and~Moyano-Fern\'{a}ndez,~J.J.: \emph{On
the relation between the generalized Poincar\'{e} series and the
St\"{o}hr zeta function}. Proc. Am. Math. Soc. \textbf{137}
(2009), no.~1, 51--59.

\bibitem {D} Denef,~J.: \emph{On the degree of Igusa's local zeta
function}. American Journal of Math. \textbf{109},(1987), no. 6,
991--1008.




\bibitem {G} Garc\'\i a, A.: \emph{Semigroups associated to singular points of plane curves}. J. reine angew. Math. \textbf{336} (1982),
165--184.






\bibitem{kiyek} Kiyek,~K., Vicente,~J.~L.: \emph{Resolution of Curve and Surface Singularities in Characteristic Zero},
 Kluwer, Dordrecht, 2004.

\bibitem{K} Kunz, E.: \emph{The value-semigroup of a one-dimensional Gorenstein ring}. Proc. Am Math. Soc. \textbf{25} (1970),
748--751.


\bibitem {MoCu}Moyano-Fern\'{a}ndez, J.J.: \emph{Curvettes and clusters of infinitely near points}. Rev. Mat. Complut. \textbf{24}, (2011), 439--463.

\bibitem {MM}Moyano-Fern\'{a}ndez, J.J.: \emph{Generalised Poincar\'e series and embedded resolution of curves}. Monatshefte Math. Online first publication (2010), DOI 10.1007/s00605-010-0259-z, 24 pp.

\bibitem {MZ}Moyano-Fern\'{a}ndez, J.J., Zuniga-Galindo, W.A.: \emph{Zeta functions for curve singularities}. Nagoya Math. J. \textbf{198} (2010), 47--75.

\bibitem {Ros1}Rosenlicht, M.: \emph{Equivalence relations on algebraic
curves}. Ann. of Math. (2) \textbf{56}, (1952), 169--191.



\bibitem {Stohr}St\"{o}hr, K.-O.: \emph{On the poles of regular
differentials of singular curves}. Bol. Soc. Brasil. Mat. (N.S.)
\textbf{24} (1993), no. 1, 105--136.

\bibitem {stohr}St\"{o}hr, K.-O.: \emph{Local and Global Zeta Functions of Singular Algebraic
Curves}. Journal of Number Theory \textbf{71}, (1998), 172--202.

\bibitem {stohrneu}St\"{o}hr,~K.-O.: \emph{Multi-variable Poincar\'e series of algebraic curve singularities over finite fields}. Math. Zeitschrift  \textbf{262}, (2008), no. 4, 849--866, DOI: 10.1007/s00209-008-0402-x

\bibitem {W}Waldi, R.: \emph{Wertehalbgruppe und Singularit\"{a}t einer ebenen
algebroiden Kurve}. Ph.D.~Thesis, Universit\"at Regensburg,
Regensburg, Germany, 1972.




\bibitem {Z1}Z\'{u}\~{n}iga Galindo,~W.A.: \emph{Zeta functions and Cartier
divisors on singular curves over finite fields}. Manuscripta Math.
\textbf{94} (1997), no. 1, 75--88.


\bibitem {Z3}Z\'{u}\~{n}iga-Galindo,~W.A.: $\emph{Zeta}$ $\emph{functions}$
$\emph{of}$ $\emph{singular}$ $\emph{curves}$ $\emph{over}$
$\emph{finite}$ $\emph{fields}$. Rev. Colombiana Mat. \textbf{31}
(1997), no. 2, 115--124.
\end{thebibliography}
\end{document}